\documentclass[a4paper,10pt]{amsart}
\usepackage[utf8x]{inputenc}
\usepackage{amssymb,latexsym}
\usepackage{amsmath}
\usepackage{graphicx}

\setlength{\topmargin}{7mm}
\setlength{\headheight}{7mm}
\setlength{\headsep}{7mm}
\setlength{\textheight}{240mm}
\setlength{\textwidth}{160mm}
\setlength{\oddsidemargin}{0mm}
\setlength{\evensidemargin}{0mm}

\newtheorem{theorem}{Theorem}[section]
\newtheorem{lemma}[theorem]{Lemma}
\newtheorem{assertion}[theorem]{Assertion}

\newtheorem{conjecture}[theorem]{Conjecture}
\newtheorem{corollary}[theorem]{Corollary}

\newtheorem{observation}[theorem]{Observation}

\theoremstyle{remark}

\newtheorem{example}[theorem]{Example}

\renewcommand{\geq}{\geqslant}
\renewcommand{\leq}{\leqslant}
\renewcommand{\ge}{\geqslant}
\renewcommand{\le}{\leqslant}

\def\cref#1{Corollary~$\ref{#1}$}

\def\Z{\mathbb{Z}}

\usepackage{array,color,colortbl}

\title{Covers in partitioned intersecting hypergraphs}

\author{Ron Aharoni}\thanks{The research of the first  author was
supported by  BSF grant no.
2006099, by an ISF grant  and by the Discount Bank
Chair at the Technion.}
\address{Department of Mathematics\\ Technion}
\email[Ron Aharoni]{raharoni@gmail.com}
\author{ C.J. Argue}
\address{Department of Mathematics, University of Chicago}
\email[C.J. Argue]{cjargue@gmail.com}

\begin{document}

\maketitle

\begin{abstract}
Given an integer $r$ and a vector $\vec{a}=(a_1, \ldots ,a_p)$ of
positive numbers with $\sum_{i \le p} a_i=r$, an $r$-uniform
hypergraph $H$ is said to be {\em $\vec{a}$-partitioned} if
$V(H)=\bigcup_{i \le p}V_i$, where the sets $V_i$ are disjoint, and
$|e \cap V_i|=a_i$ for all $e \in H,~~i \le p$. A $\vec{1}$-partitioned
hypergraph is said to be $r$-{\em partite}. Let $t(\vec{a})$ be the
maximum, over all intersecting $\vec{a}$-partitioned hypergraphs
$H$, of the minimal size of a cover of $H$. A famous conjecture of
Ryser  is that $t(\vec{1})\le r-1$. Tuza \cite{tuza}
conjectured that if $r>2$ then $t(\vec{a})=r$ for every two
components vector $\vec{a}=(a,b)$. We prove this conjecture whenever
$a\neq b$, and also for $\vec{a}=(2,2)$ and $\vec{a}=(4,4)$.

\end{abstract}

\section{Introduction}
An {\em $r$-graph} is an $r$-uniform hypergraph, namely a set of
sets called {\em edges}, all of size $r$. For a hypergraph $H$ we
denote by $\nu(H)$ the largest size of a matching (set of disjoint
edges) in $H$, and by $\tau(H)$ the minimal size of a cover (a set
of vertices meeting all edges). Since the union of all edges in a
maximal matching is a cover, in an $r$-graph $\tau \le r\nu$. Ryser
 conjectured that in $r$-partite hypregraphs $\tau \le
(r-1)\nu$. The conjecture appeared in a Ph.D thesis of his student, Henderson. At about the same time Lov\'asz \cite{lovasz} made the stronger conjecture
that in a non-empty $r$-partite hypergraph $H$ there exists a set $S$ of $r-1$ vertices such that $\nu(H-S) \le \nu(H)-1$.
 The case $r=2$ of Ryser's conjecture is K\"onig's theorem \cite{konig}. The
conjecture was proved for $r=3$ in \cite{ryser3}. The Lov\'asz version is open even in this case. The fractional
version, $\tau^* \le (r-1)\nu$, is a corollary of a famous theorem
of F\"uredi \cite{furedi}.

Of particular interest is the case $\nu=1$, namely that of
intersecting hypergraphs. As defined in the abstract, given a vector
$\vec{a}=(a_1, \ldots ,a_p)$ of positive numbers with $\sum_{i \le
p} a_i=r$, an $r$-uniform hypergraph $H$ is said to be {\em
$\vec{a}$-partitioned} if $V(H)=\bigcup_{i \le p}V_i$, where the
sets $V_i$ are disjoint, and $|e \cap V_i|=a_i$ for all $e \in H,~~i
\le p$. We denote by $t(\vec{a})$ the maximum of $\tau(H)$ over all
$\vec{a}$-partitioned intersecting hypergraphs. (Using a common abbreviated notation, we shall write $t(a_1, \ldots ,a_m)$ for  $t((a_1, \ldots ,a_m))$.) So, the $\nu=1$ case of Ryser's
conjecture is that $t(\vec{1})\le r-1$. This is known to be sharp
for $r$ for which there exists an $r$-uniform projective plane. In
\cite{abu, ronjanosian} the conjecture was shown to be sharp also for the first value
of $r$ for which an $r$-uniform projective plane does not exist, namely $r=7$. It
is plausible that the conjecture is sharp for all $r$.

A natural question is whether weaker conditions than $r$-partiteness
suffice to guarantee  $\tau \le r-1$. In the fractional case, this is indeed true: by F\"uredi's
theorem $\tau^* \le (r-1)\nu$  for all
$\vec{a}$-partitioned hypergraphs, for every non-trivial partitioning $\vec{a}$  of $r$. But it is likely that
$t(\vec{a})=r-1$ is quite rare. In \cite{tuza} the following was
suggested:

\begin{conjecture} \label{tuza}
If $r>2$ then $t(\vec{a})=r$ for all
two-components vectors $\vec{a}$. \end{conjecture}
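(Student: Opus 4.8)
First, the inequality $t(\vec a)\le r$ is immediate: an intersecting hypergraph has $\nu=1$, a maximal matching is a single edge, and the $r$ vertices of that edge cover $H$. So the content of the conjecture is a lower bound: for $\vec a=(a,b)$ with $r=a+b>2$ one must exhibit an intersecting $\vec a$-partitioned $H$ with no cover of size $r-1$. (By F\"uredi's theorem quoted above every such $H$ already has $\tau^{\ast}(H)\le r-1$, so one is producing examples with a genuine integrality gap $\tau-\tau^{\ast}\ge 1$.)

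For $a\ne b$, say $a<b$, the plan is an explicit two-sided construction. A convenient choice of sizes is $|V_1|=2a+b-1$ and $|V_2|=a+2b-1$, the smallest for which $\tau\bigl(\binom{V_1}{a}\bigr)\ge r$ and $\tau\bigl(\binom{V_2}{b}\bigr)\ge r$, so that a cover cannot succeed by exhausting a single side. Let the edges of $H$ be all sets $A\cup B$ with $(A,B)$ ranging over a family $\mathcal F\subseteq\binom{V_1}{a}\times\binom{V_2}{b}$ with the properties: \textbf{(1)} $\mathcal F$ contains no two pairs $(A,B),(A',B')$ with $A\cap A'=\emptyset$ \emph{and} $B\cap B'=\emptyset$ --- this is precisely the requirement that $H$ be intersecting, since $A\cup B$ and $A'\cup B'$ meet iff $A\cap A'\ne\emptyset$ or $B\cap B'\ne\emptyset$; \textbf{(2)} every $a$-subset of $V_1$ and every $b$-subset of $V_2$ occurs in some pair of $\mathcal F$; and, more strongly, \textbf{(3)} for all $X_1\subseteq V_1$, $X_2\subseteq V_2$ with $|X_1|\ge a$, $|X_2|\ge b$ and $|X_1|+|X_2|\ge 2r-1$, the family $\mathcal F$ meets $\binom{X_1}{a}\times\binom{X_2}{b}$. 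The $r=3$ case is the model to generalise: there $|V_1|=3$, $|V_2|=4$, and $\mathcal F$ realises the three perfect matchings of $K_4$, each attached to a distinct vertex of $V_1$.

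Granting such an $\mathcal F$, the equality $\tau(H)=r$ is a short case analysis. A cover $S=S_1\cup S_2$ ($S_i\subseteq V_i$) covers $H$ iff $\mathcal F$ contains no pair $(A,B)$ with $A\subseteq V_1\setminus S_1$ and $B\subseteq V_2\setminus S_2$. If $|S_1|+|S_2|=r-1$ then $|V_1\setminus S_1|+|V_2\setminus S_2|=(|V_1|+|V_2|)-(r-1)=2r-1$, while $|S_1|,|S_2|\le r-1$, which equal $|V_1|-a$ and $|V_2|-b$, force $|V_1\setminus S_1|\ge a$ and $|V_2\setminus S_2|\ge b$; so property \textbf{(3)} (the extreme splits being the content of \textbf{(2)}) produces an edge disjoint from $S$. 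Hence $\tau(H)\ge r$, and with the trivial bound $\tau(H)=r$, i.e.\ $t(a,b)=r$.

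The real work is constructing $\mathcal F$: properties \textbf{(1)} and \textbf{(3)} pull against each other --- \textbf{(1)} forbids a "doubly disjoint" configuration inside $\mathcal F$, while \textbf{(3)} forces $\mathcal F$ to be spread across every large rectangle --- and reconciling them for all $a<b$ should require an explicit recipe along the lines of the $K_4$ example: a complementation-and-pairing on one side, indexed by the other, where the asymmetry $a<b$ is exactly what destroys the forbidden disjoint pairs. I expect this to be the main obstacle. When $a=b$ that device degenerates, which is presumably why only the symmetric cases $\vec a=(2,2)$ and $\vec a=(4,4)$ are settled: there I would instead look for a compact explicit intersecting $(a,a)$-partitioned hypergraph --- drawn from a small design or a kindred algebraic gadget --- and verify, by hand or a finite computation, that no $(r-1)$-set meets all edges, the lack of a uniform such gadget being what leaves $(3,3)$ and beyond open.
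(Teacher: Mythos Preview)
The statement you are attempting is a conjecture, and the paper does \emph{not} prove it in full: the case $a=b$ is settled only for $(2,2)$ and $(4,4)$ by explicit examples, and $t(3,3)$ is listed as open. So a complete proof along the lines you sketch for the symmetric case would be new mathematics, not something to compare against.

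For the asymmetric case $a\neq b$ (the paper's Theorem~\ref{main}), your reduction is sound --- if a family $\mathcal F$ with properties \textbf{(1)}--\textbf{(3)} exists on sides of sizes $2a+b-1$ and $a+2b-1$, the cover analysis you give is correct --- but you have not constructed $\mathcal F$. You say so yourself: ``The real work is constructing $\mathcal F$\ldots\ I expect this to be the main obstacle.'' That is the entire content of the theorem, and nothing in the proposal indicates how the asymmetry $a<b$ would be exploited to reconcile the tension between \textbf{(1)} and \textbf{(3)} beyond the single worked case $a=1,b=2$. As it stands this is a reformulation, not a proof.

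The paper's route is completely different and avoids your rectangle property altogether. On the $a$-side it builds an \emph{evasive} cross-intersecting family $(H_1,\ldots,H_m)$ of $a$-graphs --- either three matchings in an $a\times a$ grid plus all transversals (covering $a/2\le b<a$), or the $a+1$ parallel classes of a Desarguian affine plane when $a$ is a prime power --- where ``evasive'' means $\tau(\bigcup H_i)=2a-1$. On the $b$-side it takes $m$ disjoint copies $F_1,\ldots,F_m$ of any intersecting $b$-graph with $\tau=b$, and sets $G=\{h\cup f:h\in H_i,\ f\in F_i\}$. A short three-case argument (cover meets all $H_i$; cover meets no $H_i$; cover meets some but not all) gives $\tau(G)=a+b$ whenever $a/(m-1)\le b<a$. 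The remaining pairs are then handled by an induction that refines the $b$-side into a $(v-b,b)$-partition. This machinery is explicit at every step, whereas your $\mathcal F$ is only postulated.
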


Our main result is:

\begin{theorem}\label{main}
 If  $a\neq b$ then $t(a,b)=a+b$.
\end{theorem}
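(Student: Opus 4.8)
The upper bound is the easy half: as noted in the introduction, a maximal matching of an intersecting $r$-graph is a single edge, so $\tau\le r\nu=r$ and hence $t(a,b)\le a+b$ for every $(a,b)$. The theorem therefore amounts to the reverse inequality, i.e.\ to producing an intersecting $(a,b)$-partitioned hypergraph $H$ with $\tau(H)\ge a+b$. Since $t(a,b)=t(b,a)$ we may assume $a<b$.

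The plan is to build $H$ on $V(H)=V_1\cup V_2$ with every edge of the form $S\cup T$, where $S$ is a block of an auxiliary $a$-uniform hypergraph $\mathcal S$ on $V_1$ and $T$ a block of an auxiliary $b$-uniform hypergraph $\mathcal T$ on $V_2$. Two features are required. First, to make $H$ intersecting it is enough that whenever two blocks $S,S'$ of $\mathcal S$ are \emph{disjoint}, the blocks of $\mathcal T$ attached to them pairwise meet; one secures this by organising $\mathcal T$ into a small number of pairwise cross-intersecting ``parallel classes'' (e.g.\ the parallel classes of an affine plane, or of a suitable net / transversal design), attaching to each $S$ one entire such class, and choosing the attachment via a proper colouring of the disjointness graph of $\mathcal S$ by the set of classes. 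Second, to push $\tau(H)$ up we need \emph{both} projections to be hard to cover, $\tau(\mathcal S)\ge a+b$ and $\tau(\mathcal T)\ge a+b$ — these are forced, since a cover of either projection already covers $H$ — so neither $\mathcal S$ nor $\mathcal T$ may be intersecting; each must be genuinely ``two layers deep''. The prototype is $a=1$: take $V_2=[2b]$, let the classes of $\mathcal T$ be the complementary pairs $\{U,[2b]\setminus U\}$ with $U\in\binom{[2b]}{b}$, and let $V_1$ index these pairs with $S=\{P\}$ for the pair $P$; then $\mathcal T=\binom{[2b]}{b}$ has $\tau=b+1=a+b$ and the projection of singletons has $\tau=|V_1|\ge a+b$.

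Granting such an $H$, the lower bound on $\tau$ runs by a short case analysis on a cover $C=C_1\cup C_2$ with $C_i\subseteq V_i$. If $C_1$ covers the projection $\mathcal S$, then $|C|\ge|C_1|\ge\tau(\mathcal S)\ge a+b$. Otherwise some block $S_0$ of $\mathcal S$ misses $C_1$, so $C_2$ must meet every block of $\mathcal T$ attached to $S_0$; that is a whole parallel class, which partitions $V_2$ into at least $b$ blocks, so $|C_2|\ge b$, and if moreover $|C_1|\ge a$ we are done. Finally, when $|C_1|<a$ one argues that every colour class of $\mathcal S$ still has a block avoiding $C_1$, hence $C_2$ must meet every block of $\mathcal T$, i.e.\ $C_2$ is a blocking set of $\mathcal T$ and $|C_2|\ge\tau(\mathcal T)\ge a+b$. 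In all cases $|C|\ge a+b$.

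The hard part — and the heart of the paper — is the design-theoretic engineering behind the second paragraph: producing, for \emph{every} $a<b$ rather than just for convenient values, an $\mathcal S$ with $\tau(\mathcal S)\ge a+b$ whose disjointness graph is properly colourable by the available index set, together with a $\mathcal T$ that has $\tau(\mathcal T)\ge a+b$, resolves into the required pairwise cross-intersecting classes, and has each colour class robust to the deletion of fewer than $a$ vertices. This is where one must invoke existence of enough mutually orthogonal Latin squares / transversal designs, and, for parameters where the classes are not directly available, a recursive or product construction. The hypothesis $a\ne b$ enters exactly at this point: the scheme is inherently lopsided — the ``fat'' side $\mathcal T$ must support more parallel classes than a ``thin'' side of order $a$ can require, which needs $b>a$ — and it degenerates when $a=b$, which is why $(2,2)$ and $(4,4)$ must be treated by separate ad hoc constructions rather than by the general argument.
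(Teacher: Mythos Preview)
Your architecture matches the paper's: one side carries a cross-intersecting family (your parallel classes of $\mathcal T$; the paper's \emph{evasive} family $(H_1,\dots,H_m)$ of $a$-graphs on the large side), the other side carries colour classes that are each internally intersecting with cover number equal to the block size (your colour classes of $\mathcal S$; the paper's disjoint copies $F_1,\dots,F_m$ of an intersecting $b$-graph $F$ with $\tau(F)=b$), and your three-case cover analysis is the paper's argument up to relabelling. But what you have written is an outline, not a proof: you explicitly defer the only non-trivial content --- the actual construction --- to ``design-theoretic engineering'', ``enough mutually orthogonal Latin squares'', and ``a recursive or product construction'', none of which you carry out. Concretely, you never build a $\mathcal T$ with $\tau(\mathcal T)\ge a+b$ (you gesture at affine planes but do not handle non-prime-power orders); you never build an $\mathcal S$ whose every colour class has $\tau\ge a$ (without which your final step ``$|C_1|<a\Rightarrow$ every colour class has an uncovered block'' is unjustified); and you never check that the number of available classes of $\mathcal T$ suffices to properly colour the disjointness graph of $\mathcal S$, with all colours actually used.

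The paper fills these gaps more simply than you suggest. General MOLS are never needed. For $m=3$, an evasive family is built on the $a\times a$ grid (rows, columns, and permutation transversals), with $\tau(\bigcup_i H_i)=2a-1$ proved directly via Hall's theorem; paired with $m$ disjoint copies of any intersecting $b$-graph with $\tau=b$, this alone handles every pair with $\tfrac{a}{2}\le b<a$. Desarguian affine planes together with the Jamison--Brouwer--Schrijver bound dispose of all prime-power $a$. The remaining pairs are reached by a short induction on $a+b$: rewrite $a+b=u+v$ with $\tfrac{u}{2}\le v<u$ and $v\ne 2b$, apply the $m=3$ construction to $(u,v)$, and for the building block $F$ use an intersecting $(v-b,b)$-partitioned $v$-graph with $\tau=v$ supplied by the inductive hypothesis; this yields $t(u,v-b,b)=a+b$ and hence $t(a,b)=a+b$. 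It is this induction --- not further design theory --- that reaches every $a\ne b$.
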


\section{Proof of Theorem \ref{main}}

Let $m \ge 2$.
A family $(H_1, H_2, \ldots, H_m)$ of $a$-graphs is called {\em cross intersecting} if  any two edges $e,f$
taken from distinct $H_i$s have nonempty intersection. If all $H_i$ are $a$-uniform, then this implies
\begin{enumerate}
\item

 $\tau(H_i)\le a$ (any edge from $H_j,~j \neq i$ is a cover for $H_i$)

 \item

$\tau\left(\bigcup_{i\leq m} H_i\right) \le 2a-1$ ( the union of two edges, taken from distinct $H_i$s,
is a cover for $\bigcup_{i \le m}H_i$.)

\end{enumerate}

 If $\tau(H_i)=a$ for all $i \le m$ and $\tau\left(\bigcup_{i\leq m} H_i\right) = 2a-1$ then the family is called {\em evasive}.

 \begin{observation}\label{largetau}
 If $(H_1, H_2, \ldots, H_m)$ (where $m\ge 2$) is an evasive system of cross intersecting $a$-graphs then $\tau(H_i)=a$ for all $i \le m$.

 \end{observation}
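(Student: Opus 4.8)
The plan is to deduce $\tau(H_i)=a$ from the single extremal condition $\tau\bigl(\bigcup_{i\le m}H_i\bigr)=2a-1$. Under the definition of \emph{evasive} as stated this condition is already part of the hypothesis, so strictly speaking the Observation records that the clause ``$\tau(H_i)=a$ for all $i$'' in that definition is redundant; what I would actually prove is that the condition $\tau\bigl(\bigcup_{i\le m}H_i\bigr)=2a-1$ by itself forces $\tau(H_i)=a$. I would split the claim into a trivial upper bound and a short contradiction argument for the lower bound.

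For the upper bound I would simply invoke item~(1) above: since $m\ge 2$, for each $i$ there is some $j\ne i$ with $H_j$ non-empty, and by cross-intersection any edge of $H_j$ is a cover of $H_i$ of size $a$; hence $\tau(H_i)\le a$ for every $i\le m$. (Non-emptiness of the $H_i$ is implicit in the set-up; in an evasive system it follows from $\tau(H_i)=a\ge 1$.)

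For the lower bound I would assume, toward a contradiction, that $\tau(H_j)\le a-1$ for some $j$. Fix a cover $C$ of $H_j$ with $|C|\le a-1$ and an arbitrary edge $e\in H_j$. By cross-intersection $e$ meets every edge of every $H_i$ with $i\ne j$, so $e$ covers all of those $H_i$, while $C$ covers $H_j$; thus $C\cup e$ covers $\bigcup_{i\le m}H_i$. The key point is that $C$, being a cover of $H_j$, must meet the edge $e\in H_j$, so $|C\cup e|\le (a-1)+a-1=2a-2$, contradicting $\tau\bigl(\bigcup_{i\le m}H_i\bigr)=2a-1$. Hence $\tau(H_i)\ge a$, and together with the upper bound, $\tau(H_i)=a$ for all $i\le m$.

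I do not expect a genuine obstacle here; the only subtlety worth flagging is the ``$-1$'' economy in the last step — the fact that a cover of $H_j$ necessarily hits any edge of $H_j$ — which is precisely what improves the generic bound $2a-1$ of item~(2) to $2a-2$ and produces the contradiction.
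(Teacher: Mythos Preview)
Your argument is correct and matches the paper's one-line justification: assume a cover $C$ of some $H_i$ with $|C|<a$, take any $e\in H_i$, and note that $C\cup e$ covers $\bigcup_j H_j$ with $|C\cup e|<2a-1$. You have made explicit the key ``$-1$'' saving (that $C$ must meet $e$) which the paper leaves to the reader, but the approach is the same.
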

The reason - if there exists a cover $C$ of $H_i$ of size smaller than $a$, then for any edge $e \in H_i$  the set $C \cup e$ is a cover of size smaller than $2a-1$.

\begin{lemma}\label{evasive} \nopagebreak \hfill
\begin{enumerate}
\item For every $a$ there exists an evasive cross intersecting family of three $a$-graphs.
\item If $a=p^s$ for some prime $p$ and integer $s$ then there exists an evasive cross intersecting family of $a+1$ $a$-graphs.
\end{enumerate}

\end{lemma}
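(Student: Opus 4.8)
The plan is to exhibit an explicit family for each part; in both cases cross-intersection and the bounds $\tau(H_i)=a$ are routine, and the whole content is the lower bound $\tau(\bigcup_i H_i)\ge 2a-1$.

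\emph{Part (1).} Work on the grid $G=[a]\times[a]$ of $a^2$ cells, which I identify with the edges of the complete bipartite graph $K_{a,a}$ between its rows and columns. Let $H_1$ be the set of $a$ rows, $H_2$ the set of $a$ columns, and $H_3$ the set of all $a!$ permutation matrices (graphs of permutations of $[a]$, viewed as $a$-subsets of $G$). A row and a column share a cell, and a permutation matrix meets every row and every column, so any two edges from distinct $H_i$ meet and the family is cross-intersecting. Since $H_1$ and $H_2$ partition $G$ into $a$ parts, $\tau(H_1)=\tau(H_2)=a$; and $\tau(H_3)=a$, because a full row covers $H_3$ while no set $C$ of $a-1$ cells does: if it did, $K_{a,a}\setminus C$ would have no perfect matching, hence a vertex cover by $\rho$ rows and $\gamma$ columns with $\rho+\gamma\le a-1$, forcing $|C|\ge(a-\rho)(a-\gamma)\ge a$. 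For the union, a row together with a column is a cover, so $\tau(\bigcup_i H_i)\le 2a-1$. Conversely, suppose $C$ is a cover with $|C|\le 2a-2$. Since $C$ meets every permutation matrix, $K_{a,a}\setminus C$ has no perfect matching, so there are $\rho$ rows and $\gamma$ columns, $\rho+\gamma\le a-1$, with every cell off these lines lying in $C$; that is $(a-\rho)(a-\gamma)$ cells. Since $C$ must also meet each of those $\rho$ rows and $\gamma$ columns, and the cells just counted lie in none of them, $C$ contains at least $\max(\rho,\gamma)$ further cells, so $|C|\ge(a-\rho)(a-\gamma)+\max(\rho,\gamma)$. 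A routine optimization over $\rho+\gamma\le a-1$ shows this is always $\ge 2a-1$ (the minimum $2a-1$ occurring when $\{\rho,\gamma\}=\{0,a-1\}$), a contradiction. Hence $\tau(\bigcup_i H_i)=2a-1$ and the family is evasive.

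\emph{Part (2).} Let $a=p^s$ and take $H_1,\dots,H_{a+1}$ to be the $a+1$ parallel classes of lines of the affine plane of order $a$ (equivalently, the classes coming from a complete set of mutually orthogonal Latin squares of order $a$, which exists since $a$ is a prime power). Two lines from distinct parallel classes meet in one point, so the family is cross-intersecting; each $H_i$ partitions the $a^2$ points into $a$ lines, so $\tau(H_i)=a$; and $\bigcup_i H_i$ is the set of all lines of the plane, so a cover of it is precisely a blocking set. By the theorem of Jamison (and, independently, Brouwer and Schrijver), the smallest blocking set of $AG(n,q)$ has $n(q-1)+1$ points; for $n=2$ this gives $\tau(\bigcup_i H_i)=2a-1$, so the family is evasive.

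\emph{Main obstacle.} In each part the only substantial step is $\tau(\bigcup_i H_i)\ge 2a-1$. In part (1) this is the K\"onig-type count above, and it is essential to take $H_3$ to be \emph{all} permutation matrices: a single ``diagonal'' parallel class (say, the one from the cyclic Latin square) already admits a blocking set strictly smaller than $2a-1$ (of size about $3a/2$, or even $a$ when $a$ is odd). For the same reason one cannot replace the full affine plane in part (2) by a net with fewer parallel classes, which is exactly why $a$ is required to be a prime power there; and I do not see how to avoid quoting the (non-elementary) Jamison--Brouwer--Schrijver theorem for the final bound.
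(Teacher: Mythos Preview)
Your proof is correct and follows essentially the same approach as the paper: the same row/column/permutation-matrix construction for part (1) and the same affine-plane construction with the Jamison--Brouwer--Schrijver theorem for part (2). The only cosmetic difference is that in part (1) you phrase the key count via K\"onig's theorem (a small vertex cover of $K_{a,a}\setminus C$), whereas the paper phrases it via Hall's theorem (a large rectangle $S\times T\subseteq C$); these are dual formulations of the same argument, and your version is arguably cleaner.
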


  \begin{proof}

       (1)~~ Consider the set of vertices
        $V=\{v_{ij}: 1\leq i,j\leq a\}$.  Let $H_1$ be the matching
        consisting of the rows of this grid, namely the edges of $H_1$ are $R_i:=\{v_{ij} \mid j \le a\}$, and let $H_2$
be the matching consisting of the columns of the grid, namely its
edges are $C_j:=\{v_{ij} \mid i \le a\}$. Let $H_3$ be the set of all
edges of the form
        $e_\sigma = \{v_{i,\sigma(i)}: 1\leq i \leq a\}$ where $\sigma$ is a permutation of
        $[a]$.

Clearly, $(H_1,H_2,H_3)$ is cross-intersecting. To show evasiveness,  consider a cover $C$ of $H_1\cup H_2\cup H_3$. Since $C$
covers $H_3$, by Hall's theorem there are sets $S,T\subseteq [a]$
such that         $M := \{v_{s,t}: s\in S, t\in T\} \subseteq C$ and
$|S|+|T|>a$. Let $d=\min(|S|,|T|)$. Since  $C$ covers $H_1\cup H_2$, we have $|C \setminus M| \ge a - d$.
         thus $|C| \geq |S|\cdot |T| +d =a(a-d)+(a-d)=a(a-d+1)$, and since $a-d+1\ge 2$ we have $|C| \geq 2a-1$, as required. \\

         (2)~~In this case there exists an $a+1$-uniform Desarguian projective plane (the projective plane ordinarily constructed, using vector spaces, is Desarguian). Take a line $v_1v_2\ldots v_{a+1}$ in this  plane, and let $H_i ~(i \le a+1)$ be the sets of edges containing $v_i$, with $v_i$ deleted. The system of $H_i$s thus obtained is called an {\em affine plane}, in this case - a Desarguian affine plane. Clearly, the system $(H_1, \ldots, H_{a+1})$ is cross intersecting. A well known result of Jamison, Brower and Schrijver \cite{brouwerschrijver,jamison} states that it is also evasive.
\end{proof}

\begin{lemma}\label{mainlemma}\nopagebreak \hfill

Let $F$ be any intersecting $b$-graph  with
$\tau(F)=b$.
 If there
exists an evasive cross intersecting family $(H_1,H_2, \ldots ,H_m)$
of $a$-graphs, and $\frac{a}{m-1}\leq b < a$, then there exists an $(a,b)$-partitioned $a+b$-graph $G$ in which
the $b$-side consists of $m$ disjoint copies of $F$, and $\tau(G)=a+b$.
\end{lemma}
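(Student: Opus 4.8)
The plan is to build $G$ explicitly from the given pieces. Let $V(G)$ be the disjoint union of a set $A$ of size $a^2/(m-1)$-ish... wait, let me think about the structure. Actually, let me reconsider: we have $m$ copies of $F$ on the $b$-side, so the $b$-side has $mb$ vertices (assuming the copies are vertex-disjoint), organized as $F^{(1)},\dots,F^{(m)}$. The $a$-side will carry the evasive family $(H_1,\dots,H_m)$, which live on a common vertex set $W$ of size whatever $\bigcup H_i$ uses. The edges of $G$ will be of the form $h \cup f$ where, for some index $i$, $h\in H_i$ and $f\in F^{(i)}$; that is, we pair up the $i$-th graph of the evasive family with the $i$-th copy of $F$. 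Each such edge has $a$ vertices on the $A$-side and $b$ vertices on the $B$-side, so $G$ is an $(a,b)$-partitioned $(a+b)$-graph.

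First I would check that $G$ is intersecting. Take two edges $h\cup f$ (with $h\in H_i$, $f\in F^{(i)}$) and $h'\cup f'$ (with $h'\in H_j$, $f'\in F^{(j)}$). If $i=j$, then $f,f'$ lie in the same copy of $F$, which is intersecting, so $f\cap f'\neq\emptyset$. If $i\neq j$, then $h\in H_i$, $h'\in H_j$ come from distinct members of the cross-intersecting family, so $h\cap h'\neq\emptyset$. Hence any two edges meet, and $G$ is intersecting, giving $\tau(G)\le a+b$ for free (indeed even the trivial bound $\le r = a+b$, but more importantly $\le 2a-1 + \dots$, though we want the exact value). So the content is the lower bound $\tau(G)\ge a+b$.

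For the lower bound, suppose $C$ is a cover of $G$; write $C = C_A \cup C_B$ with $C_A = C\cap W$ and $C_B^{(i)} = C\cap V(F^{(i)})$. Fix an index $i$. Consider two cases depending on whether $C_A$ already covers $H_i$. If $C_A$ does \emph{not} cover some $h\in H_i$, then for \emph{every} $f\in F^{(i)}$ the edge $h\cup f$ must be hit inside $F^{(i)}$, so $C_B^{(i)}$ is a cover of $F^{(i)}$, whence $|C_B^{(i)}|\ge b$ since $\tau(F)=b$. If instead $C_A$ \emph{does} cover $H_i$ for \emph{every} $i$ for which $|C_B^{(i)}|<b$... here I want to play the evasiveness of the $H_i$ against the budget on the $A$-side. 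The key point: if for some set $I$ of indices we have $|C_B^{(i)}|\le b-1$ for all $i\in I$, then $C_A$ must cover $H_i$ for each $i\in I$ (else use the previous paragraph... wait, not quite—$C_A$ covering $H_i$ isn't forced, only that $C_B^{(i)}$ covers $F^{(i)}$, contradiction). So in fact: for every $i$, either $|C_B^{(i)}|\ge b$, or $C_A$ covers $H_i$. Let $I=\{i : C_A \text{ does not cover } H_i\}$; then $|C_B^{(i)}|\ge b$ for all $i\in I$. If $I\ne\emptyset$, say $i_0\in I$, and also $j\ne i_0$: is $C_A$ forced to cover $H_j$? Not necessarily. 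Hmm. Let me instead split on $|I|$. If $|I|=m$, then $C_B^{(i)}\ge b$ for all $i$, so $|C|\ge mb \ge a+b$ (using $b\ge a/(m-1)$, i.e. $mb \ge a+b$)—done. If $|I|<m$, then $C_A$ covers $H_j$ for some $j$, hence $|C_A|\ge \tau(H_j) = a$ (evasiveness, via Observation~\ref{largetau}); combined with $|C_B^{(i_0)}|\ge b$ for any $i_0\in I$—but wait, we need $I\ne\emptyset$ for that. If $I=\emptyset$, then $C_A$ covers every $H_i$; pick any two $H_i,H_j$ and note $C_A$ then covers $\bigcup H_i$, so by evasiveness $|C_A|\ge 2a-1 \ge a+b$ (since $b\le a-1$... wait we need $2a-1\ge a+b$, i.e. $b\le a-1$, which holds since $b<a$). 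Done. And if $\emptyset\ne I\subsetneq\{1,\dots,m\}$: $|C_A|\ge a$ (covers some $H_j$) and $|C_B^{(i_0)}|\ge b$ for $i_0\in I$, so $|C|\ge a+b$. In all cases $\tau(G)\ge a+b$, completing the proof.

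The main obstacle—and the step to get exactly right—is the case analysis on the set $I$ of indices whose $H_i$ is \emph{not} covered by the $A$-part of $C$: one must verify that the three sub-cases ($I$ empty, $I$ proper nonempty, $I=$ everything) are exhaustive and that in each the budget adds up to $a+b$, invoking evasiveness (Observation~\ref{largetau} and the $\tau(\bigcup H_i)=2a-1$ clause) in the first two and the arithmetic hypothesis $mb\ge a+b$ (equivalently $b\ge a/(m-1)$) in the last; the inequality $b<a$ is what makes $2a-1\ge a+b$ work. I would also need the trivial remark that the $m$ copies of $F$ can be taken vertex-disjoint and disjoint from $W$, so that the decomposition $C=C_A\sqcup\bigsqcup_i C_B^{(i)}$ is genuinely a partition with no double-counting.
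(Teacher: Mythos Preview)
Your argument is correct and follows exactly the paper's approach: the same construction $G=\{h\cup f: h\in H_i,\ f\in F_i\}$, the same verification that $G$ is intersecting, and the same three-case analysis on the set of indices $i$ for which the $A$-part of a putative cover fails to cover $H_i$, invoking $\tau(\bigcup H_i)=2a-1\ge a+b$, $\tau(H_j)=a$ plus $\tau(F)=b$, and $mb\ge a+b$ respectively. Aside from the stream-of-consciousness presentation, your write-up is if anything slightly cleaner than the paper's in making the disjointness of $C_A$ and the $C_B^{(i)}$ explicit.
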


\begin{proof}
         Let $F_1,\dots, F_m$ be $m$ disjoint copies of  $F$.
         Define $G = \{h\cup f: h\in H_i, f\in F_i, 1\leq i\leq m\}$. Clearly, $G$ is intersecting and is $(a,b)$-partitioned.
         We shall show that $\tau(G)=a+b$.  Let $C$ be a cover of $G$.
         If $C$ covers all $H_i$ then by the evasiveness property $|C| \ge 2a-1 \ge a+b$.
         For every $i \le m$, if $C$ does not cover  $H_i$ then it covers $F_i$. Hence, if $C$ does not cover any $H_i$ then $|C| \ge mb \ge a+b$.
         There remains the case that $C$ covers some $H_i$ but does not cover some $H_j$. In this
         case by Observation \ref{largetau} $|C \cap H_i|  \ge a$. Since also  $|C \cap F_j|  \ge b$,
         we have $|C| \ge a+b$.
\end{proof}

Note  that $F$ as in the lemma exists - for example  the collection of all $b$-subsets of $[2b-1]$. 
 Hence each part of the next lemma follows by combining Lemma \ref{mainlemma} with the corresponding part of Lemma \ref{evasive}:
\begin{lemma}\label{club}

Let  $b<a$ be integers. If either \begin{enumerate} \item $\frac{a}{2}\leq b$, or:

\item $a=p^s$ for some prime $p$ and integer $s$

\end{enumerate}
then  $t(a,b)=a+b$.

\end{lemma}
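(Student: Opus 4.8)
The plan is to prove the nontrivial inequality $t(a,b)\ge a+b$; the reverse bound $t(a,b)\le a+b$ is automatic, since in any $(a+b)$-graph a single edge is a cover. To get the lower bound I will exhibit, in each case, an intersecting $(a,b)$-partitioned $(a+b)$-graph $G$ with $\tau(G)=a+b$, and for this I will invoke \lref{mainlemma}. That lemma requires two inputs: (i) an intersecting $b$-graph $F$ with $\tau(F)=b$, and (ii) an evasive cross intersecting family $(H_1,\dots,H_m)$ of $a$-graphs with $\frac{a}{m-1}\le b<a$. For (i) one can always take $F$ to be the family of all $b$-subsets of $[2b-1]$: any two such subsets meet (their sizes sum to more than $2b-1$), and deleting fewer than $b$ vertices leaves at least $b$ vertices, hence an uncovered edge, so $\tau(F)=b$.

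For part (1), the hypothesis $b\ge a/2$ gives $\frac{a}{3-1}=\frac a2\le b<a$, so the parameter condition of \lref{mainlemma} holds with $m=3$. \lref{evasive}(1) supplies an evasive cross intersecting family of three $a$-graphs, so \lref{mainlemma} produces the desired $G$, whence $t(a,b)=a+b$.

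For part (2), when $a=p^s$ \lref{evasive}(2) supplies an evasive cross intersecting family of $m=a+1$ $a$-graphs; then $\frac{a}{m-1}=\frac{a}{a}=1\le b<a$ because $b$ is a positive integer, so the parameter condition again holds and \lref{mainlemma} yields $G$ with $\tau(G)=a+b$.

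There is no genuine obstacle at this stage: the entire content sits in Lemmas \ref{mainlemma} and \ref{evasive}, and all that remains is to check that the arithmetic constraint $\frac{a}{m-1}\le b$ can be met — with $m=3$ in case (1) (using $b\ge a/2$) and with $m=a+1$ in case (2) (using $b\ge 1$). Both verifications are immediate.
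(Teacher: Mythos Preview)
Your argument is correct and is precisely the paper's own: supply $F=\binom{[2b-1]}{b}$, then combine \lref{mainlemma} with part (1) of \lref{evasive} (taking $m=3$) when $b\ge a/2$, and with part (2) of \lref{evasive} (taking $m=a+1$) when $a$ is a prime power.
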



\begin{lemma}\label{bminusk}
If $k<b$ and both $(a,b)$ and $(b-k,k)$ satisfy either $(1)$ or $(2)$ then $t(a,b-k,k)=a+b$.
\end{lemma}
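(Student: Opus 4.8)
The bound $t(a,b-k,k)\le a+b$ is immediate, since $\tau\le r=a+b$ for any intersecting $r$-graph. For the reverse inequality the plan is to build an intersecting $(a,b-k,k)$-partitioned $(a+b)$-graph $G$ with $\tau(G)=a+b$. The key point is that the hypergraph $G$ produced by Lemma~\ref{mainlemma} from a $b$-graph $F$ has its $b$-side equal to a disjoint union of $m$ copies of $F$; so if $F$ itself carries a $(b-k,k)$-partition, this partition lifts to a refinement of the $b$-side of $G$, turning $G$ into an $(a,b-k,k)$-partitioned hypergraph without changing $\tau(G)$.

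First I would use the hypothesis on the pair $(b-k,k)$: writing its larger entry first and applying Lemma~\ref{club}, we get $t(b-k,k)=b$, so there is an intersecting $(b-k,k)$-partitioned $b$-graph $F$ with $\tau(F)=b$; fix its vertex partition $V(F)=U\cup W$, so $|e\cap U|=b-k$ and $|e\cap W|=k$ for each $e\in F$.

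Next I would use the hypothesis on $(a,b)$ (for which $b<a$) to pick an evasive cross intersecting family of $a$-graphs adapted to $b$. If $(a,b)$ satisfies $(1)$, i.e.\ $\tfrac a2\le b$, take the family of three $a$-graphs from Lemma~\ref{evasive}(1); then $m=3$ and $\tfrac{a}{m-1}=\tfrac a2\le b$. If $(a,b)$ satisfies $(2)$, i.e.\ $a=p^s$, take the family of $a+1$ $a$-graphs from Lemma~\ref{evasive}(2); then $m=a+1$ and $\tfrac{a}{m-1}=1\le b$ (note $b\ge 2$ since $1\le k<b$). In either case $\tfrac{a}{m-1}\le b<a$, so Lemma~\ref{mainlemma} applies with this $F$ and this family and produces an $(a,b)$-partitioned $(a+b)$-graph $G$, with $\tau(G)=a+b$, whose $b$-side is a union of $m$ pairwise disjoint copies $F_1,\dots,F_m$ of $F$.

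Finally I would refine the partition. Let $U_i\cup W_i$ be the copy in $F_i$ of the partition $U\cup W$. Partition $V(G)$ into the $a$-side of $G$, the set $\bigcup_{i\le m}U_i$, and the set $\bigcup_{i\le m}W_i$. An edge of $G$ meeting the $b$-side in a copy of an edge of $F$ inside $F_i$ then meets the $a$-side in $a$ vertices, $\bigcup_{i\le m}U_i$ in $b-k$ vertices, and $\bigcup_{i\le m}W_i$ in $k$ vertices (using that distinct $F_i$ are vertex-disjoint). Hence $G$ is $(a,b-k,k)$-partitioned, intersecting, and has $\tau(G)=a+b$, so $t(a,b-k,k)\ge a+b$ and we are done. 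I do not expect a genuine obstacle here: the only thing to be careful about is that Lemma~\ref{mainlemma} really does deliver $G$ with its $b$-side a disjoint union of copies of $F$ (which is part of its statement), so that partitions of $F$ transport to $G$.
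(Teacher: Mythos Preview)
Your argument is correct and follows exactly the paper's approach: use Lemma~\ref{club} on $(b-k,k)$ to obtain an intersecting $(b-k,k)$-partitioned $b$-graph $F$ with $\tau(F)=b$, and then feed this $F$ into Lemma~\ref{mainlemma} (using the hypothesis on $(a,b)$ to supply the evasive family), so that the resulting $G$ inherits an $(a,b-k,k)$ partition from the disjoint copies of $F$ on its $b$-side. The paper compresses all of this into two sentences, whereas you spell out the choice of $m$ and the verification of $\tfrac{a}{m-1}\le b<a$ in each case and the lifting of the partition; but the content is the same.
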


\begin{proof}
The assumption that $(b-k,k)$ satisfies either $(1)$ or $(2)$ implies by
Lemma \ref{club} that there exists an intersecting
$(b-k,k)$-partitioned $b$-graph $G$, with $\tau(G)=b$. Putting $F=G$
in Lemma \ref{mainlemma} yields the present lemma.
\end{proof}

    We now combine the results of Lemmas \ref{club} and \ref{bminusk} to prove Theorem \ref{main}.

    \begin{proof}
       As before, we shall assume that $b<a$. The only pair $(a,b)$ with $b<a\leq 8$ that does not satisfy either condition in
       Lemma \ref{club} is $(a,b)=(6,2)$.
       Since $(5,3)$ and $(2,1)$ both satisfy $(1)$ of Lemma \ref{club},  by Lemma \ref{bminusk}, $t(6,2)=t(5,2,1)=8$.

       Thus, we may assume that $a>8$ and $2b<a$. We proceed by induction on $a+b$.
Then there exists a pair of
numbers
$(u,v)$ satisfying \\ (i)~$\frac{u}{2}\leq v < u <a$~~\\ (ii)~ $u+v=a+b$  and\\
~~(iii)~$v\neq 2b$ (we shall use $v-b \neq b$).\\

Indeed, at least one of the pairs $(u,v)=\left( \left\lceil \frac{a+b+3}{2}
\right\rceil,\left\lfloor \frac{a+b-3}{2} \right\rfloor \right)$ and
            ~~ $~~ (u,v)=\left( \left\lceil \frac{a+b+1}{2} \right\rceil, \left\lfloor \frac{a+b-1}{2} \right\rfloor
            \right)$  satisfies all three conditions:
            the assumption that $a+b>9$ implies (i) for both pairs, and (iii) is satisfied by at least one of these pairs.
By the induction hypothesis $t(v-b,b)=v$, which by Lemma
\ref{bminusk} implies that $t(u,v-b,b)=a+b$, which in turn implies
that
            $t(u+v-b,b)=a+b$, namely $t(a,b)=a+b$.
    \end{proof}

\section{Other values of $\vec{a}$}

\subsection{An example showing
        $t(2,2)=4$.}

    \begin{example}
        Let $U=\{u_i: i\in \Z_5\}$, $W=\{w_i: i\in \Z_5\}$, and arrange each of $U$ and $W$ in a pentagon.
        We  construct a hypergraph $H$ with $10$ edges,  $\nu=1$ and $\tau=4$, as follows.
        Five edges, $e_1, \ldots ,e_5$,  consist each of an edge in the $U$-pentagon and a diagonal parallel to it in the $W$-pentagon
       (so, say, $e_i=\{u_i, u_{i+1}, w_{i-1},w_{i+2}\}$).
       The other five edges, $f_1, \ldots, f_5$, consist each of an edge of the $W$ pentagon, and the parallel diagonal of the $U$-pentagon, shifted by $1$ (so, say,
       $f_i=\{u_{i-1}, u_{i+1}, w_{i}, w_{i+1}\}$). Every two $e_i$s meet, because a diagonal in $W$ is disjoint only from the diagonals shifted $+1$ and $-1$ with respect to it, and then the corresponding parallel edges  in $W$ meet. The $f_i$s meet for a similar reason. Each $e_i$ meets each $f_j$ because of the shifting. Thus $\nu(H)=1$.
    \end{example}

       \begin{assertion} $\tau(H) = 4$.
       \end{assertion}

We have to show that any   $S\subset V$  of size $3$  is not a cover.
       If $S \subset U$ then it misses an edge of the complete graph on $U$, and hence by itself it is not a cover for $H$. Similarly, if $S \subset W$ then it is not a cover. Thus we may assume that (say)
        $|S\cap U|=2$ and $|S\cap W| = 1$.
        Let $E'$ be the set of edges that do not intersect $S\cap U$.
        There are distinct $i,j$ such that $\{e_i, e_j\}\subset E'$ or $\{f_i,f_j\}\subset E'$.

        For any $i,j$, $|e_i\cap e_j|=1$.
        By the choice of the edges $e_i,e_j$ this intersection point is in $U\backslash S$.
        Thus, $e_i\cap W$ and $e_j\cap W$ are disjoint, so $S$ cannot cover both $e_i$ and $e_j$.
        The argument is similar if $\{f_i,f_j\}\subset E'$.
        Thus $S$ is not a cover, which completes the proof that $\tau(H)=4$.

\subsection{An example showing  $t(2,2,2,2)=8$}

        Let $T$ be the  truncated $4$-uniform projective plane (namely, the projective plane of order $3$, with a vertex deleted). Then $T$ is a $3$-regular $4$-partite hypergraph, with $9$ edges and $12$ vertices partitioned into parts of size $3$.
         Denote its sides by $V_1,V_2,V_3,V_4$.
        For each $v\in V(T)$, let $G_v$ be a copy of $K_4$.
         For each $f=(f_1,f_2,f_3,f_4) \in E(T)$ and each $v=f_i \in F$
         choose arbitrarily a perfect matching $m(f,v)=\{m(f,v,0),m(f,v,1)\} $  in $G_v$. This can be done in such a way that for every $v \in V(T)$, denoting the three edges of $T$ going through $v$ by $f,g,k$,
         the three matchings $m(f,v), m(g,v), m(k,v)$ are distinct.

     Let $B$ be the set of all binary strings $\vec{\beta}=(\beta_1, \beta_2, \beta_3, \beta_4)$  such that $\beta_1=0$ if $\beta_3=\beta_4$ and $\beta_1=1$ if $\beta_3\neq \beta_4$. Clearly, $|B|=8$.
For every pair $(\beta, f) \in B \times E(T)$  let $h=h(\beta,f)$ be the set $\bigcup_{i \le 4} \bigcup m(f,f_i,\beta_i)$. So, $\vec{\beta}$ decides for each $i$ which of the two edges in the matching $m(f,f_i)$ is chosen by $h$.
Let $H$ be $\{h(\beta,f) \mid \beta \in B,~f \in E(T)\}$. Clearly, $H$ is $(2,2,2,2)$-partitioned, with sides
            $S_i = \bigcup_{v\in V_i} V(G_v)~~(i=1,2,3,4)$. It has $72$ edges.

\begin{assertion} $\nu(H) = 1$.
       \end{assertion}
\begin{proof}
For any $\vec{\beta},~\vec{\gamma} \in B$, if $\beta_3\neq \gamma_3$ and $\beta_4\neq \gamma_4$, then $\beta_1=\gamma_1$.
        Thus for any $f\in E(T)$ it is true that $h(f,\beta) \cap h(f,\gamma)\neq \emptyset$.
        Consider next two edges $h(\beta,f)$ and $h(\gamma,g)$ in $H$ for   $f \neq g$. By the intersection property of $P$,  there exists a vertex $v \in f \cap g$, say $v=f_i=g_i$. By the construction $m(f,v)\neq m(g,v)$. Hence the two edges in $S_i$ representing $f$ and $g$ in $h(\beta,f)$ and $h(\gamma,g)$ intersect.
                \end{proof}
\begin{assertion} $\tau(H) = 8$.
       \end{assertion}
\begin{proof} 
                 Assume for  contradiction that there exists a cover $C$ of size $7$.
	For each edge $e$, fix a vertex $x_e\in C$ contained in $e$. For each $e$, any pair $(y,e)$ where $y\in (C \cap e)\setminus \{x_e\}$ 
is said to constitute a {\em waste}. Since
                 $\sum_{v \in C}deg_H(v) = 7 \times 12 =84$ and $|E(H)|=72$,  we are allowed at most  $84-72=12$
                 wastes.

                Note that the common degree of two vertices belonging to different sides $S_i$ is  $2$,
                while the common degree of two vertices belonging to  the same $G_v$ is $4$.
                Suppose first that
                there exist two vertices in $C$ that meet a side in distinct
                $G_v$s. Then all other vertices, apart from possibly
                one that may belong to the
                third copy of $G_v$ in the same side, contributes  at
                least
                $4$ wastes, and thus the total number of wastes is more than $12$.
                Thus we may assume that each side of $H$ contains at
                most one $G_v$ containing points from $C$. Fixing a
                vertex $x$, every other vertex contributes with $x$
                 at least $2$
                wastes, while two vertices belonging to the
                same $G_v$ (which must exist) contribute $4$
                wastes, so again there are more than $12$ wastes.
    \end{proof}

\begin{corollary} $t(4,4)=8$. \end{corollary}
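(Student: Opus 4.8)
The plan is to read off $t(4,4)=8$ directly from the $(2,2,2,2)$-partitioned hypergraph $H$ constructed in the previous subsection, using the trivial but useful principle that \emph{coarsening} a partition changes neither the hypergraph as a set system nor any of its invariants. Precisely: if a hypergraph is $(a_1,\ldots,a_p)$-partitioned with sides $V_1,\ldots,V_p$, then replacing $V_1,V_2$ by $V_1\cup V_2$ exhibits it as an $(a_1+a_2,a_3,\ldots,a_p)$-partitioned hypergraph; since the edge set is not touched, it remains intersecting, and both $\tau$ and $\nu$ are unchanged.

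First I would apply this to the hypergraph $H$ of the preceding subsection, whose sides are $S_1,S_2,S_3,S_4$ (each of size $12$), merging $S_1\cup S_2$ and $S_3\cup S_4$ into two parts of size $24$. Each edge of $H$ meets each of these two parts in exactly $4$ vertices, so $H$ is $(4,4)$-partitioned; by the two assertions above it is intersecting and satisfies $\tau(H)=8$. Hence $t(4,4)\geq 8$. For the opposite inequality, any single edge of an intersecting $8$-graph is a cover of size $8$, so $\tau\leq 8$ for every intersecting $(4,4)$-partitioned hypergraph, giving $t(4,4)\leq 8$. Combining the two bounds yields $t(4,4)=8$.

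There is essentially no obstacle at this stage: all of the real work lies in the construction of $H$ and in the verification that $\nu(H)=1$ and $\tau(H)=8$, which has already been carried out. The only point to check is that the $(4,4)$ partition really is a coarsening of the $(2,2,2,2)$ one, i.e.\ that $2+2=4$ on each of the two merged pairs of sides — which is immediate from the way $H$ was built.
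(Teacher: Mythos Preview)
Your argument is correct and is exactly the intended one: the paper states the corollary immediately after establishing that the $(2,2,2,2)$-partitioned hypergraph $H$ is intersecting with $\tau(H)=8$, relying on the very coarsening principle you spell out (merge $S_1\cup S_2$ and $S_3\cup S_4$ to view $H$ as $(4,4)$-partitioned), together with the trivial upper bound $t(4,4)\le 8$. There is nothing to add.
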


The first value of the function $t$ that is not settled is $t(3,3)$.

{\bf Acknowledgement} We are grateful to Aart Blokhuis for useful information.

\end{document}